\numberwithin{equation}{section}  
\newcommand{\beq}{\begin{equation}} 
\newcommand{\eeq}{\end{equation}} 
\newcommand{\bea}{\begin{aligned}}
\newcommand{\eea}{\end{aligned}}
\newcommand{\bdm}{\begin{displaymath}}
\newcommand{\edm}{\end{displaymath}}
\newcommand{\barr}{\begin{array}}
\newcommand{\earr}{\end{array}}
\newcommand{\ben}{\begin{enumerate}}
\newcommand{\een}{\end{enumerate}}
\newcommand{\bde}{\begin{description}}
\newcommand{\ede}{\end{description}}
\newtheorem{teor}{Theorem}
\newtheorem{prop}[teor]{Proposition} 
\newtheorem{lem}[teor]{Lemma}
\newcommand{\R}{\mathbb{R}}
\newcommand{\PP}{\mathbb{P}}
\newcommand{\E}{{\mathbb{E}}}
\newcommand{\defi}{\equiv} 
\newcommand{\e}{\epsilon}
\begin{document}

\title[FPP in mean field]
{First passage percolation in the mean field limit} 

\author[N. Kistler]{Nicola Kistler}
\address{Nicola Kistler \\ J.W. Goethe-Universit\"at Frankfurt, Germany.}
\email{kistler@math.uni-frankfurt.de}

\author[A. Schertzer]{Adrien Schertzer}            
\address{adrien schertzer \\ J.W. Goethe-Universit\"at Frankfurt, Germany.}
\email{schertzer@math.uni-frankfurt.de}

\author[M. A. Schmidt]{Marius A. Schmidt}
\address{Marius A. Schmidt \\ J.W. Goethe-Universit\"at Frankfurt, Germany.}
\email{mschmidt@math.uni-frankfurt.de}

\thanks{It is a pleasure to thank Louis-Pierre Arguin and Olivier Zindy for important discussions in a 
preliminary phase of this work.}

\subjclass[2000]{60J80, 60G70, 82B44} \keywords{first passage percolation, mean field approximation, Derrida's REMs.}

 \date{\today}

\begin{abstract} 
The {\it Poisson clumping heuristic} has lead Aldous to conjecture the value of the first passage percolation on the hypercube in the limit of large dimensions. Aldous' conjecture has been rigorously confirmed by Fill and Pemantle [{\it Annals of Applied Probability} {\bf 3} (1993)] by means of a variance reduction trick. We present here a streamlined and, we believe, more natural proof based on ideas emerged in the study of Derrida's random energy models.
\end{abstract}

\maketitle

\section{Introduction}
We consider the following (oriented) first passage percolation (FPP) problem. Denote by 
$G_n = (V_n, E_n)$ the $n$-dimensional hypercube. $V_n = \{0,1\}^n$ is thus the set of vertices, and $E_n$ the set of edges connecting nearest neighbours. To each edge we attach independent, identically distributed random variables $\xi$. We will assume these to be mean-one exponentials.  (As will become clear in the treatment, this choice represents no loss of generality: only the behavior for small values matters).
We write $\boldsymbol{0}=(0,0, ... ,0)$ and ${\boldsymbol 1}=(1,1, ... ,1)$ for diametrically opposite vertices, and denote by  $\Pi_n$ the set of paths of length $n$ from \textbf {0} to \textbf {1}. Remark that $\sharp \Pi_n = n!$, and that
any $\pi \in$ $\Pi_n$ is of the form \textbf {0} = $\textit{v}_0$,$\textit{v}_1$, ..., $\textit{v}_n$ = $\textbf {1}$, 
with the $v's \in V_n$. To each path $\pi$ we assign its {\it weight} $$X_\pi\defi\sum_{(\textit{v}_j, \textit{v}_{j-1})\in\pi} \xi_{\textit{v}_{j-1},\textit{v}_j}.$$  
The FPP on the hypercube concerns the minimal weight
\beq \label{one}
m_n \defi \min_{\pi\in\Pi_n}X_\pi,
\eeq 
in the limit of large dimensions, i.e. as $n\to \infty$. The {\it leading} order has been conjectured by Aldous \cite{aldous}, and rigorously established by Fill and Pemantle \cite{Fill_Pemantle}:

\begin{teor}[Fill and Pemantle] \label{pf}
For the FPP on the hypercube,
\beq \label{pf_lim}
\lim_{n \to \infty} m_n = 1, 
\eeq
in probability. 
\end{teor}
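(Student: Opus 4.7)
The plan is to prove separately the two one-sided bounds $\PP(m_n \leq 1-\e) \to 0$ and $\PP(m_n \geq 1+\e) \to 0$. The lower bound is a routine union bound: each $X_\pi$ is a sum of $n$ i.i.d.\ mean-one exponentials, hence $X_\pi$ follows a Gamma$(n,1)$ law, and bounding $e^{-x} \leq 1$ in the density gives $\PP(X_\pi \leq t) \leq t^n/n!$. Together with the count $\sharp\Pi_n = n!$ of paths, this yields $\PP(m_n \leq 1-\e) \leq (1-\e)^n \to 0$.

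The upper bound is the substantive direction, and I would approach it via Paley--Zygmund in the spirit of the analysis of Derrida's REM. If the $n!$ weights $(X_\pi)_{\pi \in \Pi_n}$ were truly independent Gamma$(n,1)$ variables, a direct Poisson-process computation would give $\min_\pi X_\pi \to 1$; the task is thus to show that the correlations introduced by shared edges are immaterial in the relevant asymptotic regime. Concretely, I would apply Paley--Zygmund to the count
\[
N \defi \#\lb \pi \in \Pi_n \,:\, X_\pi \leq 1+\e \text{ and } \max_{e \in \pi} \xi_e \leq K (\log n)/n \rb,
\]
with $K = K(\e)$ chosen sufficiently large. The edge-level truncation is dictated by the fact that, conditional on $\lb X_\pi \leq 1+\e \rb$, the edge weights of $\pi$ are asymptotically Dirichlet, so $\max_e \xi_e$ concentrates at order $(\log n)/n$; in particular the truncation leaves the first moment essentially untouched and $\E[N] \sim (1+\e)^n \to \ii$.

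The hard part is the second moment. One decomposes $\E[N^2]$ according to the number $k$ of edges shared by a pair of paths $(\pi,\pi')$ and aims for $\E[N^2] \leq (1+o(1))(\E[N])^2$. The truncation caps the weight of any shared edge at $K(\log n)/n$, which, combined with the combinatorial count of permutation pairs whose associated paths have exactly $k$ common edges, should deliver the desired bound; Paley--Zygmund then gives $\PP(m_n \leq 1+\e) \to 1$. This second moment estimate is precisely the obstacle sidestepped by the Fill--Pemantle variance reduction trick: the REM-inspired route I envisage confronts it directly, and the principal technical challenge will be the careful matching of the hypercube path overlap combinatorics with the truncation-scale probability bounds, so that the pairs with $k$ shared edges contribute only a $(1+o(1))$-correction to the decoupled baseline.
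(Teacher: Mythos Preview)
Your lower bound is correct and matches the paper's. The upper bound sketch, however, has a genuine gap: the edge-level truncation $\max_e\xi_e\le K(\log n)/n$ does not cure the second moment. The obstruction to a plain Paley--Zygmund argument sits at $k=1$, specifically at pairs $(\pi,\pi')$ sharing the \emph{first} or the \emph{last} edge of $\pi'$. There are $\Theta((n-1)!)$ such paths for each reference $\pi'$, and for any such pair one computes
\[
\frac{\PP\bigl(X_\pi\le 1+\e,\,X_{\pi'}\le 1+\e\bigr)}{\PP\bigl(X_\pi\le 1+\e\bigr)^{2}}\ \sim\ \frac{n}{2(1+\e)}\,,
\]
so their total contribution to $\E[N^2]/(\E N)^2-1$ is a \emph{positive constant}, and Paley--Zygmund only yields $\PP(N>0)\ge c$ for some $c<1$. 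The truncation does not help here: conditionally on $\{X_\pi\le 1+\e,\,X_{\pi'}\le 1+\e\}$ the shared edge weight is already concentrated on a window of width $O(1/n)$ around $0$, so cutting at level $K(\log n)/n\gg 1/n$ removes essentially nothing from the joint probability. The truncation is asymptotically automatic on the event you are counting and therefore cannot decorrelate the pair.

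The paper resolves this by excising the first and last steps from the second moment entirely. One first shows (Proposition~\ref{countbeg}) that $\Theta(n)$ edges out of $\boldsymbol 0$ and into $\boldsymbol 1$ carry weight $\le\e/3$, and then applies Paley--Zygmund only to the middle segment $\sum_{i=2}^{n-1}\xi_{[\pi]_i}$, restricted to paths whose first and last steps lie among those good edges. This buys two things simultaneously: the first moment acquires an extra factor $(Cn)^2$, and the relevant overlap count becomes $f(n,k)$, common edges \emph{excluding} the first and last, for which $f(n,1)=O((n-2)!)$ rather than $\Theta((n-1)!)$. The $k=1$ ratio then drops from $O(1)$ to $O(1/n)$, and the full sum over $k$ is $o(1)$. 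Your truncation produces neither of these gains; the missing idea is precisely this separate handling of the boundary edges.
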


The result is surprising, but then again not. On the one hand, it can be readily checked that \eqref{pf_lim} coincides with the large-$n$ minimum of $n!$ independent sums, each consisting of $n$ independent, mean-one exponentials. The FPP on the hypercube thus manages to reach  the same value as in the case of  {\it independent FPP}. In light of the severe correlations among the weights (eventually due to the tendency of paths to overlap), this is indeed a notable feat. On the other hand, the asymptotics involved is that of large dimensions, in which case (and perhaps according to some folklore) a {\it mean-field trivialization} is expected, in full agreement with Theorem \ref{pf}. The situation is thus reminiscent of Derrida's generalized random energy models, the GREMs \cite{rem, grem, kistler}, which are hierarchical Gaussian fields playing a fundamental role in the Parisi theory of mean field spin glasses. Indeed, for specific choice of the underlying parameters, the GREMs undergo a {\it REM-collapse} where the geometrical structure  is no longer detectable in the large volume limit, see also \cite{bolthausen_kistler, bovier_kurkova}. Mean field trivialization and REM-collapse are  two sides of the same coin.

The proof of Theorem \ref{pf} by Fill and Pemantle implements a {\it variance reduction trick} which is ingenious but, to our eyes, sligthly opaque. The purpose of the present notes is to provide a more natural proof which relies, first and foremost, on neatly exposing the aforementioned point of contact between the FPP on the hypercube and the GREMs. The key observation (already present in \cite{Fill_Pemantle}, albeit perhaps somewhat implicitly) is thereby the following well-known, loosely formulated property: 
\beq \bea \label{property}
&\text{\it in high-dimensional spaces, two walkers which depart from }\\
&\qquad \text{\it one another are unlikely to ever meet again.}
\eea \eeq
Underneath the FPP thus lies an {\it approximate  hierarchical structure}, whence the point of contact with the GREMs. Such a connection then allows to deploy the whole arsenal of mental pictures, insights  and tools recently emerged in the study of the {\it REM-class}: specifically, we use the multi-scale refinement of the 2nd moment method introduced in \cite{kistler}, a flexible tool which has proved useful in a variety of models,  most notably the log-correlated class, see e.g. \cite{arguin} and references therein. (It should be however emphasized that the FPP at hand is not, strictly speaking, a log-correlated field). 

Before addressing a model in the REM-class, it is advisable to first work out the details for the associated GREM, i.e. on a suitably constructed tree. In the specific case of the hypercube, 
one should rather think of two trees patched together, the vertices {\bf 0} and {\bf 1} representing the respective roots, see Figure \ref{fig_one} below.
\begin{figure}\label{fig_one}
\includegraphics[scale=0.31]{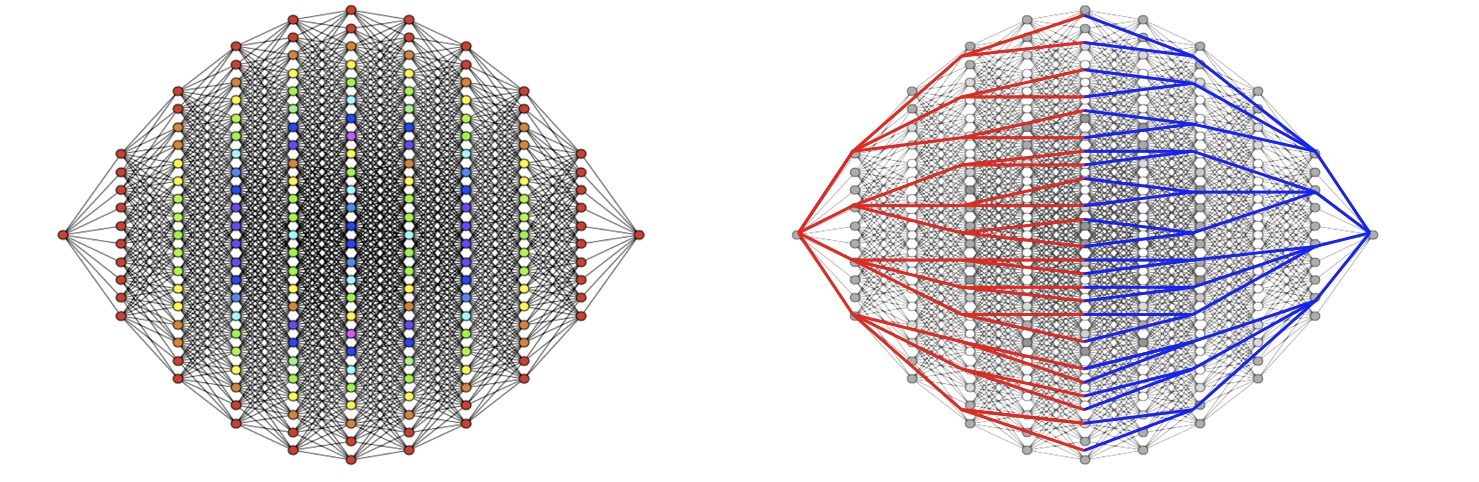}
\caption{A rendition of the 10-dim hypercube, and the associated trees patched together. Observe in particular how the branching factor decreases when wandering into the core of the hypercube: this is due to the fact that a walker starting out in {\bf 0} and heading to {\bf 1} has, after $k$ steps, $(N-k)$ possible choices for the next step. (The walker's steps correspond to the scales; the underlying trees are thus non-homogenuous, a fact already pointed out in \cite{aldous}). The figure should be taken {\it cum grano}: in case of the FFP, trees simply capture the aforementioned property of high-dimensional spaces, see \eqref{property} above, modulo the constraint that paths must start and end at prescribed vertices. .}
\end{figure}
For brevity, we restrain from giving the details for the tree(s), and tackle right away the FPP on the hypercube. Indeed, it will become clear below that once the connection with the GREMs is established, the problem on the hypercube reduces essentially to a delicate {\it path counting}, requiring in particular combinatorial estimates, many of which have however already been established in \cite{Fill_Pemantle}. 

The route taken in these notes neatly unravels, we believe, the physical mechanisms eventually responsible for the mean field trivialization. What is perhaps more, the point of contact with the REMs opens the gate towards some interesting and to date unsettled issues, such as the corrections to subleading order, or the weak limit. These aspects will be addressed elsewhere. \\

In the next section we sketch the main steps behind the new approach to Theorem \ref{pf}.
The proofs of all statements are given in a third and final section. 

\section{The multi-scale refinement of the 2nd moment method}
We will provide (asymptotically) matching lower and upper bounds for the FPP in the limit of large dimensions following the recipe laid out in \cite[Section 3.1.1]{kistler}.  The lower bound, which is the content
of the next Proposition, will follow seamlessly from Markov's inequality and some elementary path-counting. 
\begin{prop} \label{lower_bound_prop} 
For the FPP on the hypercube, 
\beq \label{lowerb}
\lim_{n \to \infty} m_n \geq 1,
\eeq
almost surely. 
\end{prop}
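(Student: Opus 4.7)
The plan is to establish the lower bound by a straightforward first-moment/union-bound argument, leveraging the Gamma distribution of the weight of a single path and the exact count $\sharp \Pi_n = n!$.

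First I would fix $\varepsilon > 0$ and write, by the union bound and symmetry,
\beq
\PP\bigl(m_n \leq 1-\varepsilon\bigr) \leq \sum_{\pi \in \Pi_n} \PP(X_\pi \leq 1-\varepsilon) = n! \cdot \PP(X_{\pi_0} \leq 1-\varepsilon),
\eeq
for any fixed reference path $\pi_0$. Since $X_{\pi_0}$ is a sum of $n$ independent mean-one exponentials, it is $\text{Gamma}(n,1)$-distributed. The core estimate is then the elementary one-line bound on the CDF near zero, obtained by dropping the $e^{-s}$ factor in the density:
\beq
\PP(X_{\pi_0} \leq t) \;=\; \int_0^t \frac{s^{n-1}}{(n-1)!}\,e^{-s}\,\dd s \;\leq\; \frac{t^n}{n!}.
\eeq
(Alternatively the same bound, up to harmless polynomial prefactors, drops out of the exponential Markov/Chernoff inequality applied to the Laplace transform $\E[e^{-\lambda X_{\pi_0}}] = (1+\lambda)^{-n}$ optimized in $\lambda$; this is the "Markov's inequality" route mentioned in the paper.)

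Combining the two displays gives the clean estimate
\beq
\PP\bigl(m_n \leq 1-\varepsilon\bigr) \;\leq\; n! \cdot \frac{(1-\varepsilon)^n}{n!} \;=\; (1-\varepsilon)^n,
\eeq
which is the numerical miracle behind Aldous' conjecture: the $n!$ paths are exactly compensated by the $1/n!$ small-ball factor of the Gamma density, leaving only the tilting factor $(1-\varepsilon)^n$. Since this is summable in $n$, the Borel--Cantelli lemma yields that, almost surely, $m_n > 1-\varepsilon$ for all but finitely many $n$. Taking $\varepsilon$ along a countable sequence tending to $0$ then gives $\liminf_{n\to\infty} m_n \geq 1$ almost surely.

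I do not foresee a genuine obstacle here: the whole argument is a one-moment computation, and the only "path counting" involved is the trivial identity $\sharp \Pi_n = n!$. The real work of the paper is of course the matching upper bound, where the $n!$ paths are heavily correlated and a naive second-moment method must fail; that is where the multi-scale refinement of \cite{kistler} and the GREM analogy will be needed.
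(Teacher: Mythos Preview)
Your proof is correct and is essentially the same as the paper's: first-moment/union bound on the counting variable, a Gamma small-ball estimate giving $\PP(X_\pi\leq x)\approx x^n/n!$, and Borel--Cantelli. The only cosmetic difference is that you drop the $e^{-s}$ factor in the density to get the clean bound $t^n/n!$, whereas the paper quotes the slightly sharper asymptotic $(1+o(1))e^{-x}x^n/n!$ from its Lemma~\ref{law}; for the lower bound this makes no difference.
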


In order to state the main steps behind the upper bound, we need to introduce some additional notation. First, remark that the vertices of the $n$-hypercube stand in correspondence with the standard basis of $\R^n$. Indeed, every edge is parallel to some unit vector $e_j$, where $e_j$ connects $(0, \dots, 0)$ to $(0, \dots, 0, 1, 0, \dots, 0)$
with a $1$ in position $j$. We identify a path $\pi$ of length $n$ from $\boldsymbol 0$ to $\boldsymbol 1$ by a permutation of $1 2 \dots n$ say $\pi_1 \pi_2\dots \pi_n$. $\pi_l$ is giving the direction the path $\pi$ goes in step $l$, hence after $i$ steps the path $\pi_1 \pi_2\dots \pi_n$ is at vertex $\sum_{j\leq i} e_{\pi_j}$. We denote the edge traversed in the $i$-th step of $\pi$ by $[\pi]_i$ and define the weight of path $\pi$ by 
\[
X_\pi = \sum_{i\leq n} \xi_{[\pi]_i}\,
\]
where $\{\xi_{e}, e\in E_n\}$ are iid, mean-one exponentials and $T_n$ the space of permutations of $1 2 \dots n$ . Note that $[\pi]_i = [\pi']_j$ if and only if $i=j$, $\pi_i = \pi'_j$ and $\pi_1 \pi_2 \dots \pi_{i-1}$ is a permutation of $\pi'_1 \pi'_2 \dots \pi'_{j-1}$. \\

As mentioned, we will implement the multiscale refinement of the 2nd moment method from \cite{kistler}, albeit with a number of twists. In the multiscale refinement, the first step is "to give oneself an epsilon of room": we will indeed consider $\e >0$ and show that 
\beq \label{goal}
\lim_{n\to \infty} {\PP\left(\#\{\pi\in\textit{T}_n, \sum\limits_{i=1}^{n}{\xi_{[\pi]_i}}\leq1+\epsilon\}>0\right)}=1.
\eeq 
The natural attempt to prove the above via the Paley-Zygmund inequality is bound to 
fail due to the severe correlations. We bypass this obstacle   
partitioning the hypercube into three regions which we refer to as 'first', 'middle' and 'last', see Fig. \ref{fig_two} below, and handling on separate footings. (This step slightly differs from the recipe in \cite{kistler}).

\begin{figure}[h!]\label{fig_two}
\includegraphics[scale=0.2]{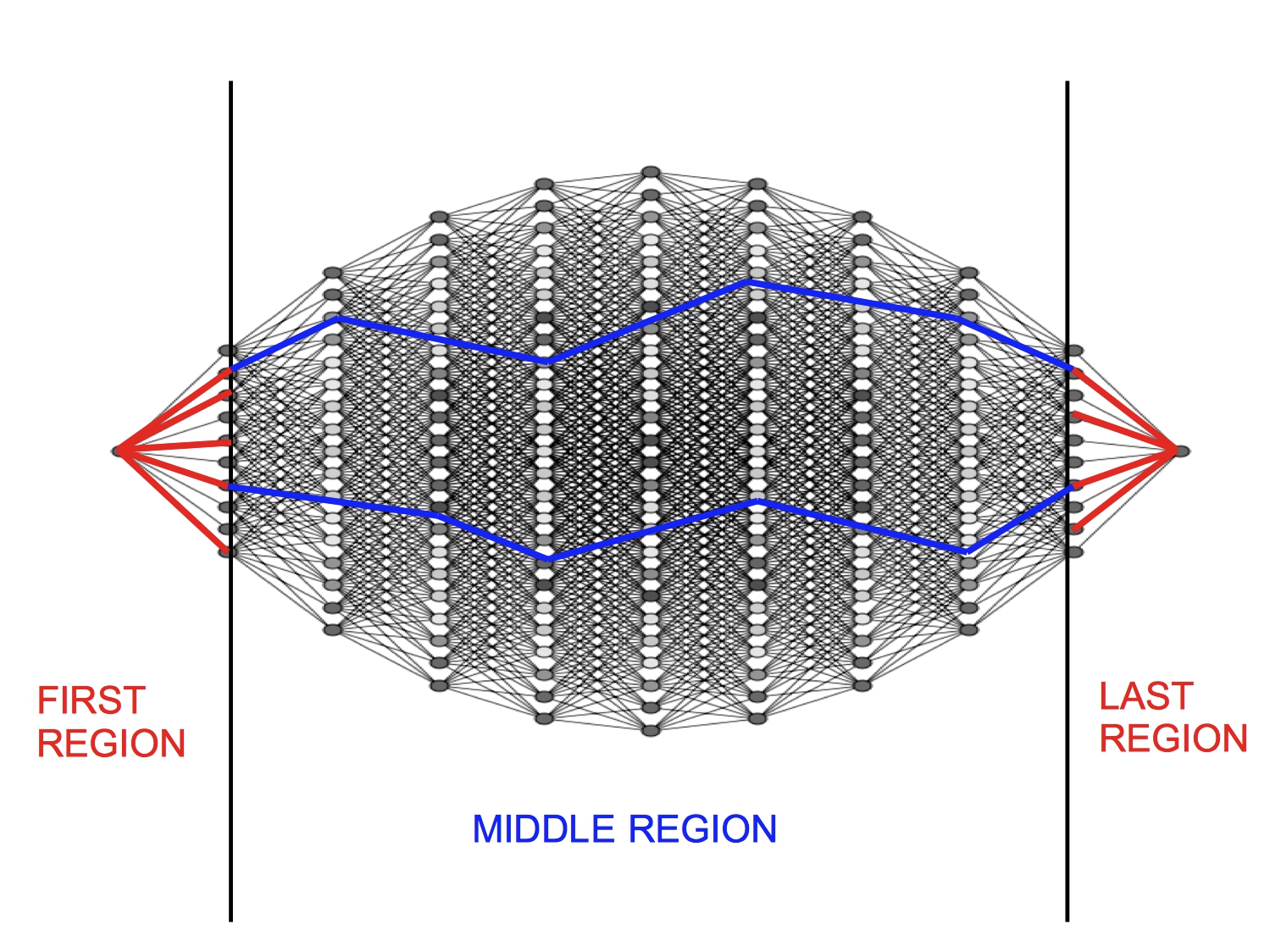}
\caption{Partitioning the hypercube into the three regions. Red edges are {\it $\e$-good}: their weight is smaller than $\epsilon/3$. Blue paths connecting first and last level have weights smaller than $1+\epsilon/3$. The total weight of a path consisting of one red edge outgoing from {\bf 0}, a connecting blue path, and a final red edge going into {\bf 1} is thus less than $1+\epsilon\;$. These are the relevant paths leading to  tight upper bounds for the FPP.} 
\end{figure}
We then address the first region, proving that one finds a {\it growing}  number of edges 
outgoing from $\boldsymbol 0$ with weight less than $\epsilon/3$. (By symmetry, the same then holds true for the last region). We will refer to these edges with low weights as {\it $\epsilon$-good}, or simply {\it good}. The existence of a positive fraction of good edges is the content of  Proposition \ref{countbeg} below.

\begin{prop} \label{countbeg}  
With
\beq \label{first and last} \bea
A^{\boldsymbol 0}_n & \defi \{v\leq n : (\boldsymbol 0,e_v)\in E_n \; \text{is $\e$-good} \} , \quad  A^{\boldsymbol 1}_n & \defi \{v\leq n: (\boldsymbol 1-e_v,\boldsymbol 1)\in E_n\; \text{is $\e$-good} \}\,,
\eea \eeq
there exists $C = C(\e) >0$ such that 
\beq
 \lim_{n\to \infty}{\PP\left(| A^{\boldsymbol 0}_n  \setminus A^{\boldsymbol 1}_n |\geq C n\right)}, \; {\PP\left(| A^{\boldsymbol 1}_n  \setminus A^{\boldsymbol 0}_n |\geq C n\right)}=1 \,.
\eeq
\end{prop}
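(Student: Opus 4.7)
The plan is to exploit the fact that the two sets $A_n^{\boldsymbol{0}}$ and $A_n^{\boldsymbol{1}}$ are defined in terms of completely disjoint collections of edges, so that after a trivial rewriting the statement reduces to a concentration inequality for a sum of iid Bernoulli variables.

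The first step is to observe that for $n \geq 2$ the $n$ edges incident to $\boldsymbol{0}$ and the $n$ edges incident to $\boldsymbol{1}$ are $2n$ pairwise distinct edges of the hypercube: the former have Hamming weight-one as their non-zero endpoint, the latter weight-$(n-1)$. Consequently the weights $\{\xi_{(\boldsymbol{0},e_v)},\,\xi_{(\boldsymbol{1}-e_v,\boldsymbol{1})} : v \leq n\}$ constitute a family of $2n$ iid mean-one exponentials. Setting $p = p(\e) \defi \PP(\xi \leq \e/3) = 1 - \ee^{-\e/3}$ and introducing the indicators $X_v \defi \1\{v \in A_n^{\boldsymbol{0}}\}$, $Y_v \defi \1\{v \in A_n^{\boldsymbol{1}}\}$, the family $\{(X_v,Y_v)\}_{v \leq n}$ is iid with $X_v$ and $Y_v$ independent $\mathrm{Bern}(p)$.

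The second step is then simply the rewriting
\[
|A_n^{\boldsymbol{0}} \setminus A_n^{\boldsymbol{1}}| = \sum_{v=1}^n X_v(1 - Y_v), \qquad |A_n^{\boldsymbol{1}} \setminus A_n^{\boldsymbol{0}}| = \sum_{v=1}^n (1 - X_v)Y_v,
\]
each of which is a $\mathrm{Binomial}(n,\,p(1-p))$ random variable with mean $np(1-p)$ and variance of order $n$. Choosing $C = C(\e) \defi \tfrac12 p(\e)(1-p(\e)) > 0$ and applying Chebyshev's inequality (equivalently, the weak law of large numbers) yields at once that both $\PP(|A_n^{\boldsymbol{0}} \setminus A_n^{\boldsymbol{1}}| \geq Cn)$ and $\PP(|A_n^{\boldsymbol{1}} \setminus A_n^{\boldsymbol{0}}| \geq Cn)$ tend to $1$.

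There is essentially no obstacle in this argument: the only conceptual point is the disjointness of the two edge-families, which buys full independence and reduces everything to an iid Bernoulli sum. The presence of the factor $(1-p)$, rather than just $p$, is not a difficulty but a feature of the statement: one demands edges that are good at one endpoint and \emph{not} good at the other, so that in the forthcoming multiscale construction the initial and final edges of the assembled paths can be drawn from disjoint index sets without conflict.
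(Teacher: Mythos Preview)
Your proof is correct and follows essentially the same approach as the paper: both recognize that the edges at $\boldsymbol{0}$ and at $\boldsymbol{1}$ are disjoint, rewrite $|A_n^{\boldsymbol{0}}\setminus A_n^{\boldsymbol{1}}|$ as a sum of $n$ iid Bernoulli$(p(1-p))$ indicators, and conclude by the law of large numbers with $C$ any constant strictly below $p(1-p)$. The only cosmetic difference is that the paper invokes the strong law whereas you use Chebyshev/WLLN, which is all that is needed for the stated convergence in probability.
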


\begin{proof}
Consider independent  exponentially (mean one) distributed random variables  $\{\xi_i\}, \{\xi_i'\}$. 
We have:
\beq\bea
&\frac{| A^{\boldsymbol 0}_n  \setminus A^{\boldsymbol 1}_n |}{n} \overset{d}{=} \frac{1}{n}\sum\limits_{i=1}^{n}{\mathbf{1}_{\{\xi_i \leq \frac{\epsilon}{3}, \xi'_i>\frac{\epsilon}{3}\}}}\underset{n \to \infty}{\overset{a.s.}{\longrightarrow}} p(\epsilon),
\eea\eeq
by the law of large numbers, where $p(\epsilon)=\PP(\xi_1 \leq \epsilon)\PP(\xi_1 > \epsilon)>0$. The claim thus holds true for any $C\in (0,p(\epsilon))$. The second claim is fully analogous.

\end{proof}
By the above, the missing ingredient in the proof of \eqref{goal} is thus the existence of (at least) one path in the middle region with weight less than $1+\e/3$, and which connects an $\e$-good edge in the first region to one in the last. This will be eventually done in Proposition \ref{connecting} by means of a full-fledged multiscale analysis. Towards this goal, consider the random variable accounting for {\it good paths} connecting $\boldsymbol 0$ and $\boldsymbol 1$ whilst going through good edges in first and last region, to wit:
\beq \label{connecting_rv}
\mathcal{N}_n=\#\left\{\pi\in\textit{T}_n: \pi_1\in  A^{\boldsymbol 0}_n  \setminus A^{\boldsymbol 1}_n, \pi_n \in  A^{\boldsymbol 1}_n  \setminus A^{\boldsymbol 0}_n  \; \text{and }\; \sum\limits_{i=2}^{n-1}{\xi_{[\pi]_i}}\leq1+\frac{\epsilon}{3} \right\}, 
\eeq
We now claim that
\beq \label{new_g}
\lim_{n\to \infty} \PP\left( \mathcal{N}_n>0 \right)  = 1,
\eeq
which would naturally imply \eqref{goal}. To establish \eqref{new_g}, we  exploit the existence of a wealth of good edges, 
\beq \label{new_g_2}
\PP\left(\mathcal{N}_n> 0\right) \geq  \PP\left(\mathcal{N}_n> 0, |A^{\boldsymbol 0}_n  \setminus A^{\boldsymbol 1}_n|\geq C n, |A^{\boldsymbol 1}_n  \setminus A^{\boldsymbol 0}_n|\geq C n \right)\eeq
Using that the weights involved in $A^{\boldsymbol 0}_n$ and $A^{\boldsymbol 1}_n $ are independent of all other weights and that considering more potential paths increases the probability of there beeing a path with specific properties we have that 
\[\PP\left(\mathcal{N}_n> 0 \mid |A^{\boldsymbol 0}_n  \setminus A^{\boldsymbol 1}_n| = j, |A^{\boldsymbol 1}_n  \setminus A^{\boldsymbol 0}_n|= k \right) \]
is monotonically growing in $j$ and $k$ as long as the probability is well defined, i.e. as long as $j+k\leq n$. Therefore
\[ (\ref{new_g_2}) \geq \PP\left(\mathcal{N}_n> 0 \mid |A^{\boldsymbol 0}_n  \setminus A^{\boldsymbol 1}_n| = \lceil Cn \rceil, |A^{\boldsymbol 1}_n  \setminus A^{\boldsymbol 0}_n|= \lceil Cn \rceil\right) \PP\left( |A^{\boldsymbol 0}_n  \setminus A^{\boldsymbol 1}_n| \geq Cn, |A^{\boldsymbol 1}_n  \setminus A^{\boldsymbol 0}_n| \geq Cn\right) \]
\[ =  \PP\left(\mathcal{N}_n> 0 \mid |A^{\boldsymbol 0}_n  \setminus A^{\boldsymbol 1}_n| = \lceil Cn \rceil, |A^{\boldsymbol 1}_n  \setminus A^{\boldsymbol 0}_n|=\lceil Cn\rceil \right) -o(1)\]
in virtue of Proposition \ref{countbeg} for properly chosen $C=C(\e)>0$. This in turn equals
\[ =  \PP\left(\mathcal{N}_n> 0 \mid A^{\boldsymbol 0}_n  \setminus A^{\boldsymbol 1}_n = A, A^{\boldsymbol 1}_n  \setminus A^{\boldsymbol 0}_n= A' \right) -o(1) \] 
for any admissible choice $A,A'$ with $|A| = |A'| = \lceil Cn \rceil$, say $A\defi \{j: j \leq Cn\}$ and $A' \defi \{j: j \geq (1-C)n\}$. Claim \eqref{new_g} will steadily follow from the 
\begin{prop} \label{connecting} (Connecting first and last region)
Let
\beq 
\textit{T}^{(1,n)}_n \defi \left\{\pi \in \textit{T}_n:\; \pi_1 \in A, \pi_n \in A' \right\}.
\eeq
It then holds: 
\[
\lim_{n\to \infty} \PP\left(\#\left\{\pi \in T^{(1,n)}_n: \; \sum_{i=2}^{n-1}{\xi_{[\pi]_i}}\leq1+ \epsilon/3\right\}> 0\right) = 1.
\]
\end{prop}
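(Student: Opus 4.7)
The plan is to apply the Paley-Zygmund inequality to a suitably \emph{truncated} count of paths, implementing the multiscale refinement of the second moment method of \cite{kistler}. A direct application to the count $N^*_n \defi \#\{\pi \in T^{(1,n)}_n : \sum_{i=2}^{n-1}\xi_{[\pi]_i} \leq 1+\epsilon/3\}$ appearing in the statement is bound to fail, since pairs of paths sharing many edges blow up the second moment. The remedy is to force the weight to be apportioned evenly along the middle region; multi-overlap pairs then become probabilistically suppressed.

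\emph{Setup and first moment.} The plan is to fix a large integer $L$, partition the middle positions $\{2,\ldots,n-1\}$ into $L$ consecutive blocks $B_1,\ldots,B_L$ of sizes $m_j \approx (n-2)/L$, and set $W_j(\pi) \defi \sum_{i \in B_j} \xi_{[\pi]_i}$. Consider the truncated count
\[
\mathcal{N}^{\mathrm{tr}}_n \defi \#\Big\{\pi \in T^{(1,n)}_n : W_j(\pi) \leq \tfrac{1+\epsilon/3}{L} \text{ for all } j\leq L\Big\} \;\leq\; N^*_n.
\]
Along a single path the $\xi_{[\pi]_i}$ are i.i.d.\ mean-one exponentials, so the $W_j(\pi)$ are independent $\Gamma(m_j, 1)$ variables. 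Combining the small-argument asymptotic $\PP(\Gamma_m \leq \alpha) \sim \alpha^m/m!$ (for $\alpha \ll m$) with Stirling's formula and $|T^{(1,n)}_n| = |A|\,|A'|\,(n-2)!$ will yield
\[
\E[\mathcal{N}^{\mathrm{tr}}_n] \geq c(L,\epsilon)\,|A|\,|A'|\,(1+\epsilon/3)^{n-2} \longrightarrow \infty.
\]

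\emph{Second moment.} Two paths $\pi,\pi' \in T^{(1,n)}_n$ share the $i$-th edge iff $\pi_i = \pi'_i$ and $\{\pi_1,\ldots,\pi_{i-1}\} = \{\pi'_1,\ldots,\pi'_{i-1}\}$; in particular, shared edges always occur at matching positions, and edges at different positions involve distinct $\xi$-variables. Let $o_j(\pi,\pi')$ count the shared edges in block $j$. Since the shared and un-shared weights are mutually independent and the $L$ blocks use disjoint sets of $\xi$-variables, the joint truncation probability factorizes across blocks, so
\[
\E[(\mathcal{N}^{\mathrm{tr}}_n)^2] \;=\! \sum_{(o_j)_{j=1}^L}\#\{(\pi,\pi'):o_j(\pi,\pi')=o_j\, \forall j\}\,\prod_{j=1}^L P_j(o_j),
\]
where $P_j(o) \defi \PP\big(\Gamma_o + \Gamma^{(1)}_{m_j-o} \leq \tfrac{1+\epsilon/3}{L},\; \Gamma_o + \Gamma^{(2)}_{m_j-o} \leq \tfrac{1+\epsilon/3}{L}\big)$ with three independent Gammas. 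The aim is to prove that profiles with $\sum_j o_j = o(n)$ dominate this sum and contribute $(1+o(1))\E[\mathcal{N}^{\mathrm{tr}}_n]^2$; Paley-Zygmund then closes the argument.

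\emph{Main obstacle.} The crux is the bookkeeping of the last sum: for each overlap profile one must balance the combinatorial count of realizing pairs against the block-wise probability $\prod_j P_j(o_j)$. The plan is to rely on the path-counting estimates of \cite{Fill_Pemantle}, which quantify the heuristic \eqref{property} that two walkers rarely reunite after splitting. The truncation is what tips the balance: although $P_j(o_j)$ can exceed $P_j(0) = \PP(\Gamma_{m_j}\leq \tfrac{1+\epsilon/3}{L})^2$ when $o_j$ is large, the per-block budget $\tfrac{1+\epsilon/3}{L}$ forces the shared $\Gamma_{o_j}$ into a large-deviation window, which keeps $\prod_j P_j(o_j)/P_j(0)$ under control for all but a combinatorially negligible set of profiles. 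Choosing $L$ large enough makes the trade-off quantitative; the non-homogeneous branching factor $n-k$ (cf.\ Figure \ref{fig_one}) yields only sub-leading corrections absorbed in $o(1)$.
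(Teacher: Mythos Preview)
Your premise that a direct Paley--Zygmund argument on $N^*_n$ is ``bound to fail'' is incorrect, and this is precisely what the paper does. The three-region decomposition is already the multiscale refinement here: by restricting to $T^{(1,n)}_n$ and dropping the first and last edge from the weight, one gains a factor $|A|\,|A'|\sim (Cn)^2$ in the first moment without altering the path-counting for overlapping pairs in the middle. Concretely, the paper shows
\[
\frac{\E\big[(N^*_n)^2\big]}{\big(\E N^*_n\big)^2}-1 \;\le\; \frac{2}{(Cn)^2}\sum_{k=1}^{n-3}\frac{f(n,k)}{(n-k-2)!\,(1+\epsilon/3)^k},
\]
and then splits the sum into three ranges of $k$ handled by the three estimates of Lemma~\ref{path_counting}. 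The dominant small-$k$ range contributes $O(n^{-3/4})$ thanks to the extra $(Cn)^{-2}$; the remaining ranges are killed by $(1+\epsilon/3)^{-k}$ together with the combinatorial bounds on $f(n,k)$. No block truncation inside the middle region is needed.

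Your proposed truncation does not deliver the suppression you claim. Using the bound \eqref{law_ii} block by block gives
\[
\frac{P_j(o_j)}{P_j(0)}\;\le\;\frac{m_j!}{(m_j-o_j)!\,\alpha^{o_j}},\qquad \alpha=\frac{1+\epsilon/3}{L},
\]
and since $m_j\approx n/L$, the product over $j$ is of order at most $(n/(1+\epsilon/3))^{k}$ for total overlap $k=\sum_j o_j$ --- exactly the same ratio as in the untruncated case. The per-block budget does not push the shared $\Gamma_{o_j}$ into any \emph{additional} large-deviation window beyond what the global constraint already enforces, so ``choosing $L$ large'' buys nothing here. Meanwhile your first-moment lower bound is off by a polynomial factor: Stirling gives $(n-2)!\prod_j\PP(\Gamma_{m_j}\le\alpha)$ of order $(1+\epsilon/3)^{n-2}\,n^{-(L-1)/2}$, not $c(L,\epsilon)(1+\epsilon/3)^{n-2}$. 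This still tends to infinity, but it confirms that the truncation only costs without helping. If you carried your plan through you would end up redoing the paper's direct computation with extra bookkeeping over overlap profiles; the mechanism that actually makes the second moment work is the $(Cn)^2$ gain from the three-region split, which you already have simply by working in $T^{(1,n)}_n$.
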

Since $\eqref{new_g}$ implies $\eqref{goal}$, the upper bound for the main theorem immediately follows from Propositions \ref{lower_bound_prop} and \ref{connecting}. It thus remains to provide the proofs of these two propositions: this is done in the next, and last section.
 
\section{Proofs}
\subsection{Tail estimates, and proof of the lower bound}
We first state some useful tail-estimates. 

\begin{lem} \label{law} Consider independent  exponentially (mean one) distributed random variables  $\{\xi_i\}, \{\xi_i'\}$. With $X_n \defi \sum_{i=1}^n \xi_i$ and $x>0$, it then holds:
\beq \label{law_i}
\PP\left( X_n\leq x\right) = \left(1+K(x,n) \right)\frac{e^{- x} x^n}{n!}, 
\eeq
with $0\leq K(x,n)\leq e^{ x} x/(n+1).$  \\

Given $X_n'\defi \sum_{i=1}^n \xi_i'$. Assume that $X_n'$ shares exactly $k$ \emph{edges} (meaning here k exponential random variables) with $X_n$, without loss of generality:
$$X_n' = \sum_{i=1}^k \xi_i + \sum_{i=k+1}^n \xi_i'.$$ Then
\beq  \label{law_ii}
\PP\left( X_n\leq x, X'_n\leq x\right) \leq \PP\left( X_n\leq x\right)\PP\left( X_{n-k}\leq x\right).
\eeq
\end{lem}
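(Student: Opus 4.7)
The lemma splits cleanly into two essentially independent tasks, one analytic (part \eqref{law_i}) and one probabilistic (part \eqref{law_ii}).

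For \eqref{law_i}, my plan is to exploit the fact that $X_n$ is $\mathrm{Gamma}(n,1)$-distributed and recast its CDF as a Poisson-type tail. Starting from
\[
\PP(X_n \leq x) = \int_0^x \frac{t^{n-1} e^{-t}}{(n-1)!}\, dt,
\]
either repeated integration by parts, or equivalently the classical Gamma--Poisson duality, gives
\[
\PP(X_n \leq x) = e^{-x} \sum_{j=n}^{\infty} \frac{x^j}{j!}.
\]
Pulling out the leading $j = n$ term yields
\[
\PP(X_n \leq x) = \frac{e^{-x} x^n}{n!}\left(1 + \sum_{k=1}^{\infty} \frac{x^k}{(n+1)(n+2)\cdots(n+k)}\right),
\]
which identifies $K(x,n)$ explicitly as the series in brackets. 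Non-negativity is then immediate. For the upper bound I would use $(n+2)(n+3)\cdots(n+k) \geq 2\cdot 3\cdots k = k!$ to obtain
\[
K(x,n) \leq \frac{1}{n+1}\sum_{k=1}^{\infty}\frac{x^k}{k!} = \frac{e^x - 1}{n+1} \leq \frac{x e^x}{n+1},
\]
the last step being $e^x - 1 = \int_0^x e^t\, dt \leq x e^x$.

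For \eqref{law_ii}, the idea is to exploit the product structure by conditioning on the shared part. Writing $Y \defi \sum_{i=1}^k \xi_i$, $Z \defi \sum_{i=k+1}^n \xi_i$ and $Z' \defi \sum_{i=k+1}^n \xi'_i$, the three variables $Y, Z, Z'$ are mutually independent with $Z$ and $Z'$ both distributed as $X_{n-k}$. Denoting the CDF of $X_{n-k}$ by $F_{n-k}$, conditional independence gives
\[
\PP\left(X_n \leq x,\, X'_n \leq x \mid Y\right) = F_{n-k}(x - Y)^2 \, \mathbf{1}_{\{Y \leq x\}}.
\]
The pivotal monotonicity $F_{n-k}(x - Y) \leq F_{n-k}(x)$ (true since $Y \geq 0$) then yields $F_{n-k}(x-Y)^2 \leq F_{n-k}(x)\, F_{n-k}(x - Y)$, so taking expectations recovers exactly $F_{n-k}(x) \cdot \PP(X_n \leq x)$, which is the claimed bound.

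Neither step presents a serious obstacle; if anything, the only place requiring some care is the telescoping in part \eqref{law_i}, where the specific form $x e^x/(n+1)$ of the error bound dictates how one should group the denominators. Part \eqref{law_ii} is then a one-line conditioning argument once the independence decomposition is in place.
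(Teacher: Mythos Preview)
Your proof is correct and follows essentially the same route as the paper: for \eqref{law_i} both arguments pass through the Gamma--Poisson identity $\PP(X_n\le x)=e^{-x}\sum_{j\ge n}x^j/j!$ and then bound the tail series (the paper via the Lagrange remainder $\sum_{k\ge n+1}x^k/k!\le e^x x^{n+1}/(n+1)!$, you via the termwise estimate $(n+2)\cdots(n+k)\ge k!$ followed by $e^x-1\le xe^x$). For \eqref{law_ii} the paper is marginally more direct: rather than conditioning on the shared block $Y=\sum_{i\le k}\xi_i$, it simply drops $Y$ from the event $\{X'_n\le x\}$ using $Y\ge 0$, obtaining $\PP(X_n\le x,\,\sum_{i>k}\xi'_i\le x)$ and then factoring by independence---but your conditioning argument reaches the same bound for the same reason.
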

\begin{proof} One easily checks (say through characteristic functions) that $X_n$ is a $\text{Gamma}(n,1)$-distributed random variable, in which case
\beq\bea
\PP\left( X_n\leq x\right) &=\frac{1}{(n-1)!}\int\limits_{0}^x t^{n-1}e^{-t} dt = 1-e^{-x}\sum\limits_{k=0}^{n-1}\frac{x^k}{k!} \,,
\eea \eeq
the second step by partial integration.  We write the r.h.s. above as
\beq\bea
& e^{-x}\sum\limits_{k=n}^{\infty}\frac{x^k}{k!} = e^{-x}\frac{x^n}{n!}\left(1+\frac{n!}{x^n}\sum\limits_{k=n+1}^{\infty}\frac{x^k}{k!} \right).
\eea \eeq
By Taylor expansions, 
\beq\bea
\sum\limits_{k=n+1}^{\infty}\frac{x^k}{k!}\leq\frac{e^{x}x^{n+1}}{(n+1)!}, \\
\eea \eeq
hence \eqref{law_i} holds with 
\beq\bea
&K(x,n):=\frac{n!}{x^n}\sum\limits_{k=n+1}^{\infty}\frac{x^{k}}{k!}\leq \frac{e^{x}x}{(n+1)}\\
\eea \eeq
As for the second claim, by positivity of exponentials, 
\beq\bea
&\PP\left( X_n\leq x, X'_n\leq x\right) \leq \PP\left(\sum\limits_{i=1}^{n}{\xi_i}\leq x, \sum\limits_{i=k+1}^{n}{\xi'_i}\leq x\right)\,.
\eea\eeq
Claim \eqref{law_ii} thus follows from the independence of the $\xi, \xi'$ random variables.
\end{proof}

Armed with these estimates, we can move to the 

\begin{proof}[Proof of Proposition \ref{lower_bound_prop} (the lower bound)]
With  $\mathcal{N}_n^x=\#\{\pi \in \textit{T}_n, X_{\pi}\leq x\}$, it holds: 
\beq\bea \label{van}
\PP\left(m_n\leq x\right)=\PP\left(\mathcal{N}_n^x \geq 1\right) &\leq\E \mathcal{N}_n^x \\
&=n!\PP\left(X_\pi\leq x\right)\\
& \stackrel{ \eqref{law_ii}}{=} \left(1+o_n(1) \right)e^{- x}x^n\,,
\eea\eeq 
the second step by Markov inequality. Remark that \eqref{van} vanishes exponentially fast for any $x<1$; an elementary application of the Borel-Cantelli Lemma thus yields \eqref{lowerb} and "half of the theorem",  the {\it lower} bound,  is proven.  
\end{proof}

\subsection{Combinatorial estimates}
The proof of the upper bounds relies on a somewhat involved path-counting procedure. The required estimates are a variant of  \cite[Lemma 2.4]{Fill_Pemantle} and are provided by the following

\begin{lem}[Path counting] \label{path_counting} Let $\pi'$ be any reference path on the $n$-dim hypercube connecting $\boldsymbol{0}$ and
$\boldsymbol 1$, say $\pi'=12...n$. Denote by $f(n, k)$ the number of paths $\pi$ that share precisely $k$ edges ($k\geq1$) with $\pi'$ whithout considering the first and the last edge. Finally, shorten $\mathfrak{n_{e}} \defi n-5e(n+3)^{2/3}$.
\begin{itemize}
\item For any  $K(n) = o(n)$ as $n \to \infty$,
\beq \label{path_counting_i}
f(n,k) \leq (1+o(1))(k+1)(n-k-1)! \, 
\eeq 
uniformly in $k$ for $k\leq K(n).$
\item Suppose $k+2\leq \mathfrak{n_{e}}$. Then, for $n$ large enough,
\beq \label{path_counting_ii}
f(n,k)\leq 2n^6(n-k)! \,.
\eeq
\item Suppose $k \geq \mathfrak{n_{e}} -1$. Then, for $n$ large enough, 
\beq \label{path_counting_iii}
f(n,k)\leq \frac{1}{k!} (n-2)! (n-k-1) \,.
\eeq
\end{itemize}
\end{lem}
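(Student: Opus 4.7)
The cornerstone is the elementary observation that $\pi$ shares its $i$-th edge with $\pi'=12\dots n$ if and only if $\pi_i=i$ and $\{\pi_1,\dots,\pi_{i-1}\}=\{1,\dots,i-1\}$, i.e.\ both $i-1$ and $i$ are \emph{breakpoints} of $\pi$ (indices $b$ with $\{\pi_1,\dots,\pi_b\}=\{1,\dots,b\}$). Fix a candidate set $S=\{i_1<\dots<i_k\}\subseteq\{2,\dots,n-1\}$ of shared positions and set $a_1:=i_1-1$, $a_j:=i_j-i_{j-1}-1$ for $2\leq j\leq k$, and $a_{k+1}:=n-i_k$, so that $\sum_j a_j=n-k$ with $a_1,a_{k+1}\geq 1$. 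Then the number of paths sharing \emph{at least} the edges of $S$ factorises as $g(S)=\prod_{j=1}^{k+1} a_j!$, since between consecutive forced breakpoints the path is a free permutation of the intervening labels. The starting point for all three bounds is the overcount
\[
f(n,k)\;\leq\;\sum_{|S|=k} g(S),
\]
valid because each path with $j\geq k$ shared edges is counted $\binom{j}{k}\geq 1$ times.

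The third bound falls out immediately from the uniform estimate $\prod_j a_j!\leq (n-k-1)!$, attained by concentrating all mass in one boundary gap (the constraints $a_1,a_{k+1}\geq 1$ prevent a single interior gap from reaching $n-k$); summing over the $\binom{n-2}{k}$ choices of $S$ gives $f(n,k)\leq \binom{n-2}{k}(n-k-1)!=(n-2)!(n-k-1)/k!$. For the first and second bounds, one organises the $S$-sum by the number $r$ of maximal runs of consecutive shared positions: there are $\binom{k-1}{r-1}$ compositions of $k$ into $r$ positive run sizes, after which the $r+1$ free regions between and around the runs form a composition of $n-k$ into positive parts $f_1,\dots,f_{r+1}$, with $g(S)=\prod_l f_l!$. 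Log-convexity of the factorial bounds the inner sum over free-region compositions by $(r+1)(n-k-r)!$ (attained when one region absorbs all but $r$ units of mass), so that
\[
\sum_{|S|=k} g(S)\;\leq\;\sum_{r=1}^{k}\binom{k-1}{r-1}(r+1)(n-k-r)!.
\]

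For the first bound, $k=o(n)$ forces the ratio of successive terms to be $O(k/(n-k))=o(1)$; the $r=1$ term then dominates, yielding $\leq(2+o(1))(n-k-1)!\leq(1+o(1))(k+1)(n-k-1)!$ with ample slack. For the second bound, the assumption $k+2\leq \mathfrak{n_e}$ (equivalently $n-k\geq 5e(n+3)^{2/3}$) is what keeps the same sum under control, but this regime is the delicate one: since the ratio $k/(n-k)$ can be as large as $n^{1/3}$, successive terms do not decay uniformly, and the peak of the sequence $\binom{k-1}{r-1}(r+1)(n-k-r)!$ must be located and estimated directly. This is the \emph{main obstacle}: one has to show that the peak and the tails together contribute at most $2n^6\cdot(n-k)!$, using the inequality $(n-k-r)!/(n-k)!\leq(n-k)^{-r}$ to trade the binomial growth $\binom{k-1}{r-1}$ for the polynomial-in-$n$ factor afforded by the lower bound on $n-k$. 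Several of the required combinatorial estimates are already carried out in \cite{Fill_Pemantle}, and the argument ultimately amounts to a careful bookkeeping of the leading few terms together with a crude tail bound.
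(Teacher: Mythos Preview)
Your third bound matches the paper's argument exactly, and the overall framework --- bounding $f(n,k)$ by $\sum_{|S|=k}\prod_j a_j!$ --- is the same overcount the paper uses (in its $G(\mathbf r)$ notation). The divergences are in (i) and (ii).

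For (i), the step ``log-convexity of the factorial bounds the inner sum over free-region compositions by $(r{+}1)(n{-}k{-}r)!$'' is wrong as written. Log-convexity bounds each \emph{term} of the inner sum by $(n{-}k{-}r)!$, but there are $\binom{n-k-1}{r}$ compositions, not $r{+}1$; your parenthetical ``attained when one region absorbs all but $r$ units of mass'' describes the maximising term, not the whole sum. Concretely, with $r=1$ and $n-k=4$ the inner sum is $1!\,3!+2!\,2!+3!\,1!=16$, whereas your bound gives $2\cdot3!=12$. The asymptotic $\sim(r{+}1)(n{-}k{-}r)!$ does hold for each fixed $r$ as $n-k\to\infty$, but you need it uniformly in $r\leq k$, which is additional work. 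The paper avoids this by decomposing on $j:=\max_i(s_i-1)$ instead of on the run count: when $j<n-4k$ it bounds the product uniformly by $(n-4k-1)!(3k+1)!$ and multiplies by the crude count $\binom{n-2}{k}$ of all $S$, showing this regime is $o((k{+}1)(n{-}k{-}1)!)$; when $j\geq n-4k$ it fixes the value $j_0$ of the largest gap (at most $(k{+}1)\binom{n-j_0-2}{k-1}$ choices of $S$), bounds the product by $j_0!(n-k-j_0)!$, and sums over $j_0$. This max-gap split never requires controlling the full composition sum.

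For (ii), your proposal is only a sketch (``careful bookkeeping \dots\ crude tail bound''), and the run-based route faces the same inner-sum difficulty without the cushion $k=o(n)$. The paper's argument here is one line: with $f_1(n,k)$ counting paths sharing exactly $k$ edges with $\pi'$ \emph{including} the first and last, one has trivially $f(n,k)\leq f_1(n,k)+f_1(n,k+1)+f_1(n,k+2)$, and \cite[Lemma~2.4]{Fill_Pemantle} already gives $f_1(n,k)\leq n^6(n-k)!$ for $k\leq\mathfrak{n_{e}}$.
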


\begin{proof} [Proof of Lemma \ref{path_counting}]
To see  \eqref{path_counting_i}, consider a path $\pi$ which shares precisely $k$ edges with the reference path 
$\pi' =12\dots n$. We set $r_i = l$ if the $l$-th traversed edge by $\pi$ is the $i$-th shared edge of $\pi$ and $\pi'$. (We set by convention $r_0 =0$ and $r_k+1 = n+1$). Shorten $\textbf{r}\defi \textbf{r}(\pi)=(r_0,...,r_{k+1})$, and  $s_i \defi r_{i+1}-r_{i}$, i=0,...,k. For any sequence $\textbf{r}_0=(r_0,...,r_{k+1})$ with 0 = $r_0<r_1<...<r_k<r_{k+1}=n+1$, let C($\textbf{r}_0$) denote the number of path $\pi$ with $\textbf{r}(\pi)=\textbf{r}_0$.  Since the values $\pi_{r_i+1},...,\pi_{r_i+s_i-1}$ must be a permutation of $\{r_i+1,...,r_i+s_i-1\}$.
one easily sees that $C(\textbf{r})\leq G(\textbf{r})$, where 
\beq\label{G}
G(\textbf{r})=\prod\limits_{i=0}^{k}(s_i-1)! \,.
\eeq
Let now $j=\textit{j}(\textbf{r}) \defi \max_i(s_i-1)$. We will consider separately the cases $\textit{j}< n-4k $ and $\textit{j}\geq n-4k$, the underlying idea being that $G(\textbf{r})$ is small in the first case, and while not small in the second, there are only few sequences with such large $\textit{j}$-value.\\
Denote by $f_{\{j< n-4k\}}(n,k)$ resp. $f_{\{j\geq n-4k\}}(n,k)$ the number of paths $\pi$ that share precisely $k$ edges with $\pi'$  not counting the first and the last edge, where $j< n-4k$ for the first function and $j\geq n-4k$ for the second one. It holds:
\beq \label{decomp}
f(n,k)=f_{\{j< n-4k\}}(n,k)+f_{\{j\geq n-4k\}}(n,k) \,.
\eeq

\begin{itemize}
\item[] {\sf Case $j< n-4k$}. We claim that 
\beq 
G(\textbf{r}) \leq (n-4k-1)!(3k+1)! \,.
\eeq 
In fact, for $j\leq n-4k-1$, and by log-convexity,  the product in \eqref{G} is maximized at $\textbf{r}'s$ such that $\textit{j}(\textbf{r}) = n-4k-1$. It thus follows that 
\beq \bea \label{tot_1}
G(\textbf{r}) & \leq \left(\max_i(s_i-1)\right) ! \left( \sum\limits_{i}(s_i-1)-\max_i(s_i-1) \right)! 
\\ & \leq (n-4k-1)!(3k+1)! \,,
\eea \eeq
the last step since $\sum_{i}(s_i-1)=n-k$. On the other hand, the number of  $\textbf{r}$-sequences under consideration is at most $\binom {n-2}{k}$: combining with \eqref{tot_1}, 
\beq\bea\label{tot_2}
&f_{\{j< n-4k\}}(n,k)\leq \frac{(n-4k-1)!(3k+1)!(n-2)!}{(k)!(n-2-k)!} \\
& =(n-k-1)!\frac{(n-4k-1)!}{(n-k-1)!}\frac{(n-2)!}{(n-2-k)!}\frac{(3k+1)!}{(k)!}\\
&\leq2(k+1)(n-k-1)!\left[\frac{(3k)^2(n-2)}{(n-4k)^3}\right]^k \,,
\eea\eeq
by simple bounds.  The term in square brackets converges to 0 as n $\to \infty$ uniformly in $k$ as long as $k\leq K(n)=o(n)$, hence the contribution from the first case is $o((k+1)(n-k-1)!)$, uniformly in such $k's$. \\

\item[] {\sf Case $j \geq n-4k$}.   Again by log-convexity of factorials,  
\beq 
G(\textbf{r}) \leq j(\textbf{r})!(n-k-j(\textbf{r}))! \,.
\eeq
The number of $\textbf{r}$-sequences for which $j(\textbf{r}) = j_0$ is at most $(k+1)$ times the number of $\textbf{r}$-sequences with $s_0-1=j_0$; since the $k-1$ common edges have to be placed before the last edge in our definition of $f(n,k)$, the latter is thus at most $\binom{n-1-j_0-1}{k-1}$. For fixed $j_0$,
the contribution is therefore at most 
\beq\bea \label{tot_3}
&\frac{(n-1-j_0-1)!(k+1)j_0!(n-j_0-k)!}{(k-1)!(n-1-j_0-k)!} \\
& \hspace{3cm} =\frac{(n-j_0-2)!(k+1)j_0!(n-j_0-k)}{(k-1)!}\,.
\eea\eeq
Summing \eqref{tot_3} over all possible values $n-4k \leq j_0 \leq  n-k-1$, we get  
\beq\bea \label{tot_4}
& f_{\{j\geq n-4k\}}(n,k) \leq (k+1)(n-k-1)!\sum\limits_{j_0=n-4k}^{n-k -1}\frac{(n-j_0-2)!j_0!(n-j_0-k)}{(k-1)!(n-k-1)!}\\
&\leq(k+1)(n-k-1)!\sum\limits_{i=1}^{3K(n)} i \left(\frac{n}{4K(n)}-1\right)^{1-i} \\
&\leq(k+1)(n-k-1)! \left(1+\sum\limits_{i=1}^{3K(n)} 2^i \left(\frac{n}{4K(n)}-1 \right)^{-i}  \right)\\
&=(k+1)(n-k-1)!(1+o_n(1)) \,.
\eea\eeq
Using the upperbounds \eqref{tot_2} and \eqref{tot_4} in  \eqref{decomp} settles the proof of  \eqref{path_counting_i}. \\
\end{itemize} 

The second claim of the Lemma relies on estimates established by Fill and Pemantle, which we now recall. Denote by $f_1(n,k)$ the number of paths $\pi$ that share precisely $k$ edges with the reference path $\pi' = 12\cdots n$. (Contrary to $f(n,k)$,  first and  last edge do matter here!) By \cite[Lemma 2.4]{Fill_Pemantle} the following holds
\beq \label{useful_pf}
f_1(n,k)\leq n^6(n-k)! \,,
\eeq
as soon as $k\leq \mathfrak{n_e}$ and $n$  is large enough.
It then holds: 
\beq \bea
f(n,k) & \leq f_1(n,k)+f_1(n,k+1)+f_1(n,k+2) \\
& \stackrel{\eqref{useful_pf}}{\leq} n^6(n-k)!(1+\frac{1}{(n-k)}+\frac{1}{(n-k)(n-k-1)})\\
&\leq  2 n^6(n-k)!\,,
\eea \eeq
yielding \eqref{path_counting_ii}. \\

It remains to address the third claim of the Lemma, which we recall reads
\beq \label{path_counting_third}
f(n,k)\leq \frac{1}{k!} (n-2)! (n-k-1)! \,,
\eeq
for $\mathfrak{n_{e}}-1\leq k \leq  n$.  For this, it is enough to proceed by worst-case:  there are at most $(n-k-1)!$ paths sharing $k$ edges with the reference-path $\pi'$ for given \textbf{r}, and $\binom {n-2}{k}$ ways to choose such  \textbf{r}-sequences. All in all, this leads to
\beq\bea
f(n,k)\leq \binom {n-2}{k}(n-k-1)!=\frac{(n-2)!(n-k-1)}{k!} \,,
\eea\eeq
settling the proof of \eqref{path_counting_third}.
\end{proof}

\subsection{Proof of the upper bound}

\begin{proof}[Proof of Proposition \ref{connecting} (Connecting first and last region)] The claim is that 
\beq \label{uu_zero}
\lim_{n\to \infty} \PP\left( {\mathcal{N}^{(1)}_n}>0\right) = 1,
\eeq
where $\mathcal{N}^{(1)}_n=\#\{\pi\in \textit{T}^{(1,n)}_n, \sum_{i=2}^{n-1}{\xi_{[\pi]_i}}\leq1+\frac{\epsilon}{3}\}$. This will now follow from the Paley-Zygmund inequality, which requires control of 1st- and 2nd-moment estimates. As for the 1st moment, by simple counting and with $C$ as in Proposition \ref{countbeg},
\beq\bea \label{last_first}
\E \mathcal{N}^{(1)}_n &= C^2 n^2 (n-2)!\times \PP\left(\sum_{i=2}^{n-1}{\xi_{[\pi]_i}}\leq1+\frac{\epsilon}{3} \right) \\
& = \kappa n^2 \left(1+\frac{\epsilon}{3}\right)^{n-2} [1+ o(1)] \qquad(n\to \infty), 
\eea\eeq
(the last step by Lemma \ref{law}) for some numerical constant $\kappa>0$.

Now shorten $A\defi \{\pi,\pi'' \in T_n $ {\it have no edges in common in the middle region}$ \}$.  For the 2nd moment, it holds: 
\beq\bea\label{2ndC}
\E\left[ {\mathcal{N}^{(1)}_n}^{2} \right] &=\sum\limits_{(\pi,\pi'') \in A}\PP\left(\sum\limits_{i=2}^{n-1}{\xi_{[\pi]_i}}\leq1+\frac{\epsilon}{3}\right)^2\\
&\qquad\qquad +\sum\limits_{(\pi,\pi'') \in {A}^c}\PP\left(\sum\limits_{i=2}^{n-1}{\xi_{[\pi]_i}}\leq1+\frac{\epsilon}{3}, \sum\limits_{i=2}^{n-1}{\xi_{[\pi'']_i}}\leq1+\frac{\epsilon}{3}\right) \\
& =:   (\Sigma_{A}) + (\Sigma_{A^c}),  \quad \text{say}.
\eea\eeq
But by independence, 
\beq\bea
(\Sigma_{A}) \leq \left(\E \mathcal{N}^{(1)}_n \right)^2,
\eea\eeq
hence it steadily follows from \eqref{2ndC} that
\beq\bea \label{bound_l_r}
1 \leq \frac{\E\left[ {\mathcal{N}^{(1)}_n}^{2} \right]}{\left(\E \mathcal{N}^{(1)}_n \right)^{2}}\leq 1 + \frac{(\Sigma_{A^c})}{\left(\E \mathcal{N}^{(1)}_n \right)^2}.
\eea\eeq
It thus remains to prove that 
\beq \label{control comp}
(\Sigma_{A^c}) = o\left( \E\left[\mathcal{N}^{(1)}_n \right]^2 \right) \qquad  (n\to \infty)\,.
\eeq
To see \eqref{control comp}, by symmetry it suffices to consider the case where $\pi''$ is any reference path, say $\pi''=\pi' = 12 \cdots n$.  By the second claim of Lemma \ref{path_counting}, and with  $X_{n}$ denoting a $\text{Gamma}(n,1)$-distributed random variable, it holds: 
\beq\bea
(\Sigma_{A^c}) &\leq   (Cn)^2 (n-2)!\sum\limits_{k=1}^{n-3}{f(n,k)\PP\left( X_{n-2}\leq 1+\frac{\epsilon}{3}\right)\PP\left( X_{n-2-k}\leq 1+\frac{\epsilon}{3}\right)}\\
&\qquad + (Cn)^2 (n-2)! \PP\left( X_{n-2}\leq 1+\frac{\epsilon}{3}\right) \,,
\eea\eeq
hence 
 \beq\bea \label{maj_1}
\frac{(\Sigma_{A^c})}{\left(\E \mathcal{N}^{(1)}_n \right)^2} &\leq \frac{1}{(Cn)^2(n-2)!}\left(\sum\limits_{k=1}^{n-3}{f(n,k)\frac{\PP\left( X_{n-2-k}\leq 1+\frac{\epsilon}{3}\right)}{\PP\left( X_{n-2}\leq 1+\frac{\epsilon}{3}\right)}+\frac{1}{\PP\left( X_{n-2}\leq 1+\frac{\epsilon}{3}\right)}}\right)\\
\eea\eeq
By Lemma \ref{law}, 
\beq\bea \label{maj_2}
\frac{\PP\left( X_{n-2-k}\leq 1+\frac{\epsilon}{3}\right)}{\PP\left( X_{n-2}\leq 1+\frac{\epsilon}{3}\right)}\leq \frac{2(n-2)!}{(n-k-2)!(1+\frac{\epsilon}{3})^k}\,,
\eea\eeq
and therefore, up to the irrelevant $o(1)$-term,  
\beq \bea \label{maj_2}
\eqref{maj_1} &\leq \frac{2}{(Cn)^2} \sum\limits_{k=1}^{n-3}\frac{f(n,k)}{(n-k-2)!(1+\frac{\epsilon}{3})^k}\, \\
& = \frac{2}{(Cn)^2} \left( \sum_{k=1}^{K(n)} + \sum_{k=K(n)+1}^{\mathfrak{n_{e}}-2 }+ \sum_{k=\mathfrak{n_{e}} -1}^{n-3} \right) \frac{f(n,k)}{(n-k-2)!(1+\frac{\epsilon}{3})^k}\,, 
\eea \eeq
where $K(n) \defi n^{1/4}$  and  $\mathfrak{n_{e}} = n-5e(n+3)^{2/3}$. By Lemma \ref{path_counting} the first sum on the r.h.s. of \eqref{maj_2} is at most
\beq \bea \label{maj_2_bis}
& \frac{2}{(Cn)^2} \sum_{k=1}^{n^{1/4}}\frac{f(n,k) }{(n-k-2)!(1+\frac{\epsilon}{3})^k} \\
& \stackrel{\eqref{path_counting_i}}{\leq}  \frac{2}{(Cn)^2} \sum_{k=1}^{n^{1/4}}  \frac{2 (k+1)(n-k-1)!}{(n-k-2)!(1+\frac{\epsilon}{3})^k} \\
& \leq \frac{4(n^{1/4}+1)}{C^2n}\sum\limits_{k=1}^{n^{1/4}}{\frac{1}{(1+\frac{\epsilon}{3})^k}} =  \frac{12}{C^2\epsilon} n^{-3/4}[1+o(1)]\underset{n \to \infty}{\longrightarrow} 0.
\eea \eeq
As for the second sum on the r.h.s. of \eqref{maj_1},
\beq \bea \label{maj_3}
& \frac{2}{(Cn)^2} \sum_{k=n^{1/4}+1}^{{\mathfrak n_e}-2} \frac{ f(n,k)}{(n-k-2)!(1+\frac{\epsilon}{3})^k} \\
& \stackrel{\eqref{path_counting_ii}}{\leq}   \frac{  4n^6  }{(Cn)^2} \sum\limits_{k=n^{1/4}+1}^{\mathfrak{n_{e}}-2}{\frac{(n-k)!}{(n-k-2)!(1+\frac{\epsilon}{3})^k}}\\
&\leq  \frac{  4n^6  }{C^2} \sum\limits_{k=n^{1/4}+1}^{\mathfrak{n_{e}}-2} \left(1+\frac{\epsilon}{3} \right)^{-k} \leq \frac{12  n^6}{\e C^2}   \left(1+\frac{\epsilon}{3} \right)^{-n^{1/4}} [1+o(1)],
\eea \eeq
which is thus also vanishing in the large-$n$ limit.  It thus remains to check that the same is true for the third and last term on the r.h.s. of \eqref{maj_2}:
\beq \bea \label{maj_4}
& \frac{2}{(Cn)^2} \sum_{{\mathfrak n_e}-1}^{ n-3}\frac{f(n,k)}{(n-k-2)!(1+\frac{\epsilon}{3})^k} \\
& \stackrel{\eqref{path_counting_ii}}{\leq}    \frac{2}{(Cn)^2} \sum\limits_{k=\mathfrak{n_e}-1}^{n-3}{\frac{(n-k-1)}{k!}\frac{(n-2)!}{(n-2-k)!(1+\frac{\epsilon}{3})^k}}\\
&\leq   \frac{ 2}{C^2n} \sum\limits_{k=\mathfrak{n_e}-1}^{n-3}  \binom {n-2}{\mathfrak{n_e}-1}\left(1+\frac{\epsilon}{3} \right)^{-k}\,,
\eea \eeq
the last inequality by simple estimates on the binomial coefficients (using $\mathfrak{n_e}-1\geq n/2$). 
Remark that 
\beq \bea \label{maj_5}
\eqref{maj_4} & \leq\frac{6}{\epsilon C^2n}\binom {n-2}{\mathfrak{n_e}}{(1+\frac{\epsilon}{3})}^{2-\mathfrak{n_e}}\\
\eea \eeq
By Stirling's formula, one plainly checks that 
\beq\bea
&\binom {n-2}{\mathfrak{n_e}}\leq \frac{n!}{\mathfrak{n_e}!} =\frac{n^ne^{\mathfrak{n_e}-n}}{(\mathfrak{n_e})^\mathfrak{n_e}}[1+o(1)]\,.\\
\eea\eeq
Plugging this estimate into \eqref{maj_5} we thus get for some numerical constant $\kappa>0$ that
\beq\bea \label{maj_5}
\eqref{maj_4} &\leq \kappa \frac{n^n}{\left(\mathfrak{n_e}\left(1+\frac{\epsilon}{3} \right)\right)^\mathfrak{n_e}} \underset{n \to \infty}{\longrightarrow} 0,
\eea\eeq
and  \eqref{control comp} follows. An elementary application of the Paley-Zygmund inequality then settles the proof  of Proposition \ref{connecting}.
\end{proof}

\end{document}